\documentclass[11pt,a4paper]{article}

\usepackage[a4paper, total={6in, 8in}]{geometry}
\usepackage[english]{babel}
\usepackage{amsmath}
\usepackage{theorem}
\numberwithin{equation}{section}
\let\theoremstyle\undefined
\usepackage{amsthm}
\usepackage{hyperref}
\usepackage{amsfonts}
\usepackage{amssymb}
\usepackage{mathtools}
\usepackage{bbm, mathrsfs, dsfont, cmll}
\usepackage{siunitx}
\usepackage{amsfonts}
\usepackage{amssymb}
\usepackage{graphicx}
\usepackage{epstopdf}
\usepackage[normalem]{ulem} 
\usepackage{algorithmic}

\usepackage[caption=false]{subfig}

\ifpdf
  \DeclareGraphicsExtensions{.eps,.pdf,.png,.jpg}
\else
  \DeclareGraphicsExtensions{.eps}
\fi

\usepackage{booktabs}
\usepackage{multirow}
\usepackage{colortbl}

\usepackage{url}
\usepackage{enumerate}
\usepackage[neverdecrease]{paralist}
\usepackage{etoolbox}
\usepackage{verbatim}
\usepackage{comment}
\usepackage{framed, color}
\usepackage{pifont, textcomp}
\usepackage{bbding}
\usepackage{scalefnt}
\usepackage[outline]{contour}
\usepackage[framemethod=tikz]{mdframed}

\usepackage{tikz}
\usepackage{pgfplots}
\pgfplotsset{compat=1.18}
\usetikzlibrary{
  calc, fadings, matrix, positioning, spy,
  decorations.pathreplacing, decorations.text, decorations.pathmorphing,
  shapes.geometric, arrows.meta, svg.path, calligraphy, 3d, plotmarks
}

\usepackage[noabbrev]{cleveref}
\crefname{equation}{equation}{equations}

\usepackage{enumitem}
\setlist[enumerate]{leftmargin=.5in}
\setlist[itemize]{leftmargin=.5in}

\newtheorem{theorem}{Theorem}[]

\newtheorem{lemma}[theorem]{Lemma}

\newcounter{MotEx}

{
  \theoremstyle{plain}
  \theoremheaderfont{\normalfont}
  \theorembodyfont{\normalfont\itshape}
  
}

\def\<{\langle}
\def\>{\rangle}

\DeclareFontFamily{U}{mathx}{}
\DeclareFontShape{U}{mathx}{m}{n}{<-> mathx10}{}
\DeclareSymbolFont{mathx}{U}{mathx}{m}{n}
\DeclareMathAccent{\widehat}{0}{mathx}{"70}
\DeclareMathAccent{\widecheck}{0}{mathx}{"71}

\DeclareMathOperator*{\argmin}{argmin}

\DeclareMathOperator{\prox}{prox}

\usepackage{amsmath,amssymb,amsthm}
\usepackage{algorithm}

\usepackage{color}
\def\prox{{\mathrm{prox}}}
\makeatletter

\gdef\listctr{list\romannumeral\the\@listdepth}\expandafter

\makeatother

\newenvironment{AlgorithmSteps}[1][1]{%
\begin{list}{\csname label\listctr\endcsname}{%
\usecounter{\listctr}

\settowidth{\labelwidth}{\textsc{Step\ #1.}}%
\setlength{\leftmargin}{\labelwidth}\addtolength{\leftmargin}{\labelsep}}}%
{\end{list}}

\newcounter{cnt}

\newcommand{\email}[1]{\protect\href{mailto:#1}{#1}}

\title{A Line--Search--Based Stochastic Gradient Method for 3D Computed Tomography}

\author{Tatiana A. Bubba\thanks{Department of Mathematics and Computer Science of the University of Ferrara, Ferrara, Italy \\(\email{tatiana.bubba@unife.it}).}
\and Elena Morotti\thanks{Department of Political and Social Sciences of the University of Bologna, Bologna, Italy \\  (\email{elena.morotti4@unibo.it}).}
\and Federica Porta\thanks{Department of Physics, Informatics and Mathematics of the University of Modena and Reggio Emilia, Modena, Italy (\email{federica.porta@unimore.it}).}
\and Valeria Ruggiero\thanks{Department of Mathematics and Computer Science of the University of Ferrara, Ferrara, Italy.} 
\and Ilaria Trombini\thanks{Department of Mathematics and Computer Science of the University of Ferrara, Ferrara, Italy (\email{ilaria.trombini@unife.it}).}
}

\begin{document}

\maketitle

\begin{abstract}
We introduce FB--LISA, a forward--backward (FB) generalization of a recently proposed 
line--search--based stochastic gradient algorithm to address the  imaging problem of volumetric reconstruction in Computed Tomography,  a substantially high demanding problem, which involves orders of magnitude of data, a high computational burden for forward and backprojection, and memory requirements that push current GPU architectures to their limits. 
Our formulation employs stochastic mini--batches composed of full 2D projections, preserving the physical structure of the acquisition process while enabling significant speed--ups during early iterations.
The resulting method demonstrates how concepts traditionally associated with deep learning can be repurposed to accelerate large--scale inverse problems, without relying on training data or learned priors.
\end{abstract}

\noindent\textsc{Keywords}: Forward--backward methods, Image reconstruction, Optimization models,  \\ Stochastic algorithms.

\section{Introduction}
Stochastic optimization is a cornerstone of modern machine learning, enabling efficient training of large--scale models through updates based on small data subsets. By avoiding full gradient computations, stochastic methods provide a fast and memory--efficient alternative, often reaching good solutions well before theoretical convergence. This trade--off makes them appealing for large--scale optimization problems beyond learning.
Recently, stochastic optimization has gained attention in inverse problems (see, e.g., \cite{Chambolle-etal-2018, Ehrhardt-etal-2019, Tang-etal-2019, Tang-etal-2020, Chambolle-etal-2024, lazzaretti2023stochastic, Ehrhardt-etal-2025,papoutsellis2026modular}). In particular, stochastic gradient methods have shown promising results in imaging applications, where large--scale operators make full updates impractical. However, most existing works focus on small--scale or 2D settings, which do not fully reflect the computational challenges of real--world imaging systems.
In this work, we address 3D X--ray computed tomography (CT)~\cite{kak2001principles}, where reconstruction involves large--scale data and repeated applications of expensive forward and backprojection operators. In this context, deterministic methods may be impractical due to time and memory constraints.
To this end, we introduce a forward--backward (FB) 
stochastic gradient method, 
called FB--LISA, specifically designed for addressing large--scale inverse problems   and built on Deep--LISA \cite{Franchini-etal-2024},  a stochastic algorithm originally proposed for smooth finite--sum optimization. 
FB--LISA inherits Deep–LISA’s adaptive hyperparameter strategy, combining line--search--based learning rate estimation with a predetermined increase in mini--batch size. This removes manual tuning and improves robustness for large--scale applications.
In contrast to Deep--LISA, FB--LISA is designed to handle non--smooth objective terms, enabling the use of regularizers and constraints common in imaging. We demonstrate its effectiveness on 3D CT measured data, showing that it outperforms several state--of--the--art reconstruction methods.

The remainder of this work is organised as follows. In Section~\ref{sec:method} we introduce FB--LISA, along with our main theoretical results,  and the mathematical formulation of the problem. Section~\ref{sec:NumericalExperiments} demonstrates the effectiveness of FB--LISA through numerical experiments on measured 3D CT data, comparing its performance with that of several analytical, deterministic and stochastic approaches.  Section~\ref{sec:conclusions} summarizes our findings and discusses future research directions. Finally, the proof of our main result is reported in the Appendix~\ref{sec:appendix}.

\section{Method}\label{sec:method}
In this section, we formalize the mathematical framework for
our proposed method, FB--LISA, within the context of 3D Computed Tomography. In particular, we state the main convergence result for FB--LISA, whose proof is later provided in Appendix~\ref{sec:appendix}.

\subsection{Problem and notation}\label{ssec:ct3D}
We consider the following CT reconstruction problem:
\begin{equation}
    \label{eq:problem}
    \argmin_{x\in\mathbb{R}^d}
    \frac{1}{2} \|Ax-b\|^2 + \mu \|x\|_1 + \iota_{\{ x\geq 0\}},
\end{equation}
where $b\in\mathbb{R}^n$ is the set of measured data, and $A\in\mathbb{R}^{n\times d}$ denotes the discretized forward operator modeling the cone--beam projection, described through $n = n_p \times n_\theta$, with $n_\theta$ denoting the number of angular projections and $n_p$ the number of detector cells.  
The unknown $x \in \mathbb{R}^d$ represents the 3D discretized volume of attenuation coefficients.
The data--fidelity term $\frac{1}{2}\|Ax-b\|^2$ corresponds to a least--squares formulation, typically arising under an additive Gaussian noise model. The function $\iota_{\{x \geq 0\}}$ is the indicator function of the nonnegative orthant, enforcing physical feasibility of the reconstruction. Finally, $\mu>0$ is a regularization parameter controlling the sparsity level induced by the $\ell_1$ norm.

The problem is convex but nonsmooth and falls within the class of composite optimization problems, for which first--order methods are well suited. Several such algorithms have been proposed for problems of the form \eqref{eq:problem} (see, e.g., \cite{Boyd-etal-2011,Chambolle-etal-2011,Combettes-etal-2005,Condat-2013,dossal}).
The growing scale of modern CT systems leads to large reconstruction problems, where full--gradient evaluations become computationally expensive \cite{cavicchioli2020gpu}. This motivates stochastic and incremental approaches, which reduce per--iteration cost by operating on data subsets \cite{Bottou-etal-2018}. In CT, this principle underlies methods such as SART \cite{andersen1984simultaneous} and ordered--subsets (OS) techniques \cite{Erdogan-etal-1999,Herman-etal-1993,Kim-etal-2015}, where the forward operator $A$ is partitioned into $n_\theta$ blocks $A_i$ corresponding to individual projection angles. The computational advantage arises from the significantly lower cost of evaluating $A_i x$ compared to $Ax$.

\subsection{FB--LISA for 3D CT}\label{ssec:LISA_for_CT}
To address 
the optimization problem in \eqref{eq:problem}, we introduce FB--LISA (cf.~Alghoritm \ref{alg:FBLISA}, where FB--LISA pseudocode is reported), starting from the original Deep--LISA method \cite[Algorithm 2]{Franchini-etal-2024}. 
This requires to generalize Deep--LISA to handle the presence of a non--smooth term in the objective function. Consider a general optimization problem of the form 
\begin{equation}\label{eq:gen_prob}
\argmin_{x\in\mathbb{R}^d} F(x)\equiv f(x) + \mathcal{R}(x)\equiv \frac{1}{N} \sum_{i=1}^N f_i(x) + \mathcal{R}(x),
\end{equation}
where $f_i:\mathbb{R}^d\rightarrow\mathbb{R}$, $i=1,\dots,N$, are differentiable functions with $L$--Lipschitz continuous gradient  and $\mathcal{R}:\mathbb{R}^d\rightarrow\mathbb{R}$ is a non--smooth and convex term. In this setting, the resulting 
update rule and  line--search inequality (see \textsc{Step 2c} of Algorithm \ref{alg:FBLISA}) take the form
$$
\bar{x}^{(k)} = \prox_{\alpha_k \mathcal{R}}(x^{(k)}-\alpha_k\nabla f_{\mathcal{N}_k}(x^{(k)}))
$$
and 
\begin{equation}\label{eq:line_search}
    f_{\mathcal{N}_k}(\bar{x}^{(k)}) \leq f_{\mathcal{N}_k}(x^{(k)})+ \nabla f_{\mathcal{N}_k}(x^{(k)})^T(\bar{x}^{(k)}-x^{(k)})
    +\frac{1}{2\alpha_k}\|\bar{x}^{(k)}-x^{(k)}\|^2, 
\end{equation}
where $\prox_{\alpha_k\mathcal{R}}(\cdot)$ represents the proximal operator related to the function $\alpha_k\mathcal{R}(\cdot)$, $\mathcal{N}_k\subset\{1,\dots,N\}$ is chosen uniformly at random and $f_{\mathcal{N}_k} = \frac{1}{|\mathcal{N}_k|}\sum_{i\in\mathcal{N}_k}f_i(x)$ is the corresponding sub--sampled objective function. 

We have the following result for FB--LISA  applied to \eqref{eq:gen_prob}. We stress that the FB--LISA method is similar to the approach in \cite{Franchini-etal-2023a}. However, the practical realization of the variance reduction, the theoretical framework and the convergence analysis (see Appendix~\ref{sec:appendix}) are different.

\begin{theorem}\label{thm:MainResult}
Given problem \eqref{eq:gen_prob}, suppose that $f_i:\mathbb{R}^d\rightarrow\mathbb{R}$, $i=1,\dots,N$, are differentiable functions with $L$--Lipschitz continuous gradient, $\mathcal{R}:\mathbb{R}^d\rightarrow\mathbb{R}$ is convex and $F\equiv f$+$\mathcal{R}$ is bounded below by $F_{inf}$. Moreover, assume that $\alpha_{max}\leq\frac{1}{2L}$ and \\ $\mathbb{E}_k[\|\nabla f({x^{(k)})-\nabla f_{\mathcal{N}_k}(x^{(k)})}\|^2]\leq\varepsilon_k$ where $\mathbb{E}_k[\cdot]$ denotes conditional expected value with respect to the $\sigma$--algebra generated by  $x^{(0)},\dots,x^{(k)}$ and $\{\varepsilon_k\}_{k\in\mathbb{N}}$ is a positive and summable sequence. Let $\{x^{(k)}\}_{k\in\mathbb{N}}$ be the sequence generated by the FB--LISA method. Then, almost surely, any limit point of the sequence $\{x^{(k)}\}_{k\in\mathbb{N}}$ is a stationary point for problem (2).
\end{theorem}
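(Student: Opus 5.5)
The plan is a stochastic descent inequality for $F$, summed via a supermartingale (Robbins--Siegmund) argument. I assume the FB--LISA update is $x^{(k+1)} = x^{(k)} + \lambda_k(\bar{x}^{(k)}-x^{(k)})$ with $\lambda_k\in(0,1]$ bounded away from zero, and $\alpha_k\in[\alpha_{min},\alpha_{max}]$ produced by the line search; if the algorithm simply sets $x^{(k+1)}=\bar{x}^{(k)}$, take $\lambda_k=1$. Write $g_k=\nabla f_{\mathcal{N}_k}(x^{(k)})$, $e_k=\nabla f(x^{(k)})-g_k$ and $d_k=\bar{x}^{(k)}-x^{(k)}$. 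Because the line search \eqref{eq:line_search} only controls the sub-sampled $f_{\mathcal{N}_k}$, I will not use it for the descent estimate. Instead I use the $L$-smoothness of the full $f$ (each $f_i$ is $L$-smooth, hence so is $f$), together with $\alpha_k\le\alpha_{max}\le 1/(2L)$.

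\textbf{Step 1: one-step inequality.} The prox characterization gives $-(d_k/\alpha_k + g_k)\in\partial\mathcal{R}(\bar{x}^{(k)})$. Convexity of $\mathcal{R}$ then yields
$\mathcal{R}(\bar x^{(k)})+g_k^T d_k+\frac1{\alpha_k}\|d_k\|^2\le \mathcal{R}(x^{(k)})$.
Next I apply convexity of $\mathcal{R}$ along the segment to $x^{(k+1)}$ and the descent lemma for $f$ with $\|\lambda_k d_k\|$, and replace $\nabla f = g_k + e_k$. This gives
\[
F(x^{(k+1)})\le F(x^{(k)}) - \lambda_k\Big(\tfrac{1}{\alpha_k}-\tfrac{L\lambda_k}{2}\Big)\|d_k\|^2+\lambda_k e_k^T d_k .
\]
Applying Young's inequality $e_k^Td_k\le \frac{1}{4\alpha_k}\|d_k\|^2+\alpha_k\|e_k\|^2$ and using $L\le 1/(2\alpha_k)$, I get
\[
F(x^{(k+1)})\le F(x^{(k)})-\tfrac{\lambda_k}{2\alpha_k}\|d_k\|^2+\lambda_k\alpha_{max}\|e_k\|^2 .
\]
Here $\tfrac{1}{\alpha_k}-\tfrac{1}{4\alpha_k}-\tfrac{1}{4\alpha_k}=\tfrac{1}{2\alpha_k}$, which is where the assumption $\alpha_{max}\le 1/(2L)$ is used.

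\textbf{Step 2: supermartingale argument.} Take $\mathbb{E}_k$ and use the hypothesis $\mathbb{E}_k\|e_k\|^2\le\varepsilon_k$. The sequence $F(x^{(k)})-F_{inf}\ge0$ then satisfies the Robbins--Siegmund conditions with summable perturbation $\alpha_{max}\varepsilon_k$. Almost surely, $F(x^{(k)})$ converges and $\sum_k \frac{\lambda_k}{2\alpha_k}\|d_k\|^2<\infty$. With $\lambda_k\ge\lambda_{min}>0$ and $\alpha_k\le\alpha_{max}$, this forces $\|d_k\|\to0$ a.s. Separately, $\sum_k\mathbb{E}\|e_k\|^2\le\sum\varepsilon_k<\infty$ gives $\sum\|e_k\|^2<\infty$ a.s., hence $e_k\to0$ a.s.

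\textbf{Step 3: limit points.} Fix an outcome in the full-measure event and a subsequence $x^{(k_j)}\to x^*$. Passing to a further subsequence, take $\alpha_{k_j}\to\alpha^*\in[\alpha_{min},\alpha_{max}]$. Since $g_{k_j}=\nabla f(x^{(k_j)})-e_{k_j}\to\nabla f(x^*)$ and $d_{k_j}\to 0$, we have $\bar x^{(k_j)}\to x^*$. The prox is jointly continuous in its point and parameter for finite convex $\mathcal{R}$, so $x^*=\prox_{\alpha^*\mathcal{R}}(x^*-\alpha^*\nabla f(x^*))$, i.e.\ $0\in\nabla f(x^*)+\partial\mathcal{R}(x^*)$. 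Alternatively, closedness of the graph of $\partial\mathcal{R}$ applied to $-(d_k/\alpha_k+g_k)\in\partial\mathcal{R}(\bar x^{(k)})$ gives the same conclusion.

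\textbf{Main obstacle.} The delicate point is Step 1. The random step $\alpha_k$ comes from the line search on $f_{\mathcal{N}_k}$, so it is not $\mathcal{F}_k$-measurable. The cross term $e_k^Td_k$ therefore cannot be killed by conditional unbiasedness. The Young-inequality bound handles this pathwise: it needs only the deterministic bounds $\alpha_{min}\le\alpha_k\le\alpha_{max}$, which the backtracking line search guarantees. I also need to confirm that the algorithm keeps $\alpha_k\ge\alpha_{min}>0$ and $\lambda_k$ bounded below, which I expect to follow from its construction.
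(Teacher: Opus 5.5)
Your argument is correct in substance. Step 1 is essentially the paper's inequality (A4): the descent lemma for the full $f$, the subgradient inequality supplied by the prox, and Young's inequality, with different weights. In FB--LISA $\lambda_k\equiv1$, since Step 2d sets $x^{(k+1)}=\bar x^{(k)}$.

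The two arguments part ways after Step 1. The paper introduces the exact proximal-gradient residual $G_k^{full}=\frac{1}{\alpha_k}\bigl(x^{(k)}-\prox_{\alpha_k\mathcal R}(x^{(k)}-\alpha_k\nabla f(x^{(k)}))\bigr)$. It uses nonexpansiveness of the prox to get $\|G_k^{full}-G_k\|\le\|e_k\|$ (with $G_k=-d_k/\alpha_k$) and rewrites the descent inequality in terms of $\|G_k^{full}\|^2$. Robbins--Siegmund then gives $G_k^{full}\to0$ a.s., and limit points are handled through the fixed-point relation with the exact gradient. You instead drive the stochastic step $d_k$ to zero, show $e_k\to0$ a.s.\ separately, and pass to the limit in the actual update (or in the subdifferential inclusion). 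The paper's route controls a genuine stationarity measure and yields the ergodic $O(1/K)$ bound on $\mathbb{E}\|G_k^{full}\|^2$ as a by-product. Yours avoids the $G_k^{full}$ comparison and keeps all the noise in the single term $\alpha_{max}\|e_k\|^2$.

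There are two points to tighten.

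First, Robbins--Siegmund requires the subtracted term to be $\mathcal F_k$-measurable. The term $\frac{1}{2\alpha_k}\|d_k\|^2$ is not, since it depends on $x^{(k+1)}$ and on $\mathcal N_k$. As you invoke it, the lemma therefore gives only $\sum_k\mathbb{E}_k\bigl[\frac{1}{2\alpha_k}\|d_k\|^2\bigr]<\infty$ a.s., not the pathwise sum. You do not need the lemma at all. Summing your pathwise inequality gives
\[
\sum_{k=0}^{K}\frac{1}{2\alpha_k}\|d_k\|^2\le F(x^{(0)})-F_{inf}+\alpha_{max}\sum_{k=0}^{\infty}\|e_k\|^2 .
\]
The right-hand side is finite a.s.\ by your own observation that $\sum_k\|e_k\|^2<\infty$ a.s. This makes the whole proof pathwise on a single full-measure event and removes any issue with the randomness of $\alpha_k$. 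The paper's passage to (A6) glosses over that issue in general, since $G_k^{full}$ depends on $\alpha_k$.

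Second, the lower bound on $\alpha_k$ is indispensable in Step 3: with $\alpha^*=0$ the fixed-point relation says nothing. It follows as in the paper's Lemma. Since $f_{\mathcal N_k}$ is an average of $L$-smooth functions, the descent lemma gives the line-search inequality for every $\alpha\le1/L$, so backtracking stops with $\alpha_k\ge\min\{\alpha_0,\beta/L\}$. In fact, if $\alpha_{max}$ is the trial value $\alpha_0$ (the only upper bound the algorithm provides), the hypothesis $\alpha_0\le 1/(2L)$ means the first trial is always accepted. Then $\alpha_k\equiv\alpha_0$ is deterministic, which is why the paper's step to (A6) is legitimate here. The measurability obstacle you anticipated therefore never arises.
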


To specifically apply FB--LISA to the CT problem in \eqref{eq:problem}, 
similarly to \cite{Ehrhardt-etal-2025, Tang-etal-2020} for 2D tomography, the fidelity term in \eqref{eq:problem} can be easily expressed as a finite sum. 
Specifically,
\begin{equation}\label{eq:fidelity_term_finite_sum}
\frac{1}{2}\|Ax - b\|^2 = \frac{1}{2 n_\theta} \sum_{i=1}^{n_\theta} \|A_i x - b_i\|^2,
\end{equation}
where \( A_i \in \mathbb{R}^{n_p \times d} \), for \( i=1, \ldots, n_\theta \), represents the CT operator associated with the \(i\)--th angle and $b_i \in \mathbb{R}^{n_p}$ denotes the corresponding projection data. 
The normalization factor $2n_\theta$ ensures that $f(x)$ can be interpreted as an empirical mean over projection angles, which is consistent with stochastic sampling strategies.
Next, we define the sub--sampled objective function $f_{\mathcal{N}_k}$ and the corresponding mini--batch selection process required in \textsc{Step 2a} in Algorithm \ref{alg:FBLISA}.
In contrast to the mini--batch selection strategy proposed in \cite{Ehrhardt-etal-2025,Tang-etal-2020}, in FB--LISA the angle indices for the mini--batch $\mathcal{N}_k$ are randomly and uniformly drawn from the set of all possible angle indices $\{1,\dots,n_\theta \}$. Given a mini--batch $\mathcal{N}_k$ of size $N_t$ at epoch $t$, we define
\begin{equation}\label{eq:f_Nk_CT}
\begin{aligned}
f_{\mathcal{N}_k}(x) 
&= \frac{n_{\theta}}{2N_t}\sum_{i\in\mathcal{N}_k}\|A_{i}x-b_i\|^2 = 
\frac{n_{\theta}}{2N_t}\|A_{\mathcal{N}_k}x-b_{\mathcal{N}_k}\|^2,
\end{aligned}
\end{equation} 
where \( A_{\mathcal{N}_k} \) denotes the restricted forward operator corresponding to the angles indexed by \( \mathcal{N}_k \), and $b_{\mathcal{N}_k} \in \mathbb{R}^{N_t}$ collects the corresponding measurements. We remark that the mini--batch size remains fixed throughout an entire epoch, corresponding to the computational cost of computing a full gradient.
A further difference from the original Deep--LISA implementation lies in the mini--batch construction. In the CT setting, the mini--batch $\mathcal{N}_k$ is generated on--the--fly by uniformly sampling projection angles at each iteration, rather than using pre--defined batches via data loaders. This strategy better exploits the structure of the forward operator while avoiding preprocessing and storage of data subsets.

\begin{algorithm}
 \caption{FB--LISA algorithm}
 \label{alg:FBLISA}

Given $T>0$, $n_{max}>0$, $x^{(0)}\in\mathbb{R}^d$, $0<N_0<n$, $\alpha_0>0$, $\beta \in (0,1)$ and a positive sequence $\{\varepsilon_k\}_{k\in\mathbb{N}}$, $\sum_k \varepsilon_k < \infty$, $C>0$, $\hat{k}=0.$

 \vspace{0.3cm}

 \textsc{For} $t=1,2,\dots,T$
 \begin{AlgorithmSteps}[4]

 \item[1]
 Choose the size $N_t$ according to
 \[
 N_{t}=\min\left\{n_{max},\max\left\{
 \left\lceil \frac{C}{\varepsilon_{\hat{k}+\lceil n/N_{t-1}\rceil}} \right\rceil,
 N_0
 \right\}\right\}.
 \]

 \item[2]
 \textsc{For} $k=\hat{k},\dots,\hat{k}+\lceil n/N_t \rceil -1$

 \begin{AlgorithmSteps}[4]

 \item[2a]
 Choose a mini-batch $\mathcal{N}_k$ of size $N_t$.

 \item[2b] Compute $f_{\mathcal{N}_k}(x^{(k)})$, $\nabla f_{\mathcal{N}_k}(x^{(k)})$.

 \item[2c]
 Compute
 \[
 \bar{x}^{(k)} = \prox_{\alpha_k \mathcal{R}}(x^{(k)}-\alpha_k \nabla f_{\mathcal{N}_k}(x^{(k)})).
 \]

 \noindent\textsc{If}
 \[
 f_{\mathcal{N}_k}(\bar{x}^{(k)}) \leq f_{\mathcal{N}_k}(x^{(k)})+ \nabla f_{\mathcal{N}_k}(x^{(k)})^T(\bar{x}^{(k)}-x^{(k)})
    +\frac{1}{2\alpha_k}\|\bar{x}^{(k)}-x^{(k)}\|^2 \]

 \noindent\textsc{Then} go to \textsc{Step 2d}.

 \noindent\textsc{Else} set $\alpha_k \leftarrow \beta \alpha_k$ and go to \textsc{Step 2c}.

 \item[2d]
 Set
 \[
 x^{(k+1)} = \bar{x}^{(k)}
 \]
 and
 \[
 \alpha_{k+1} = \alpha_0.
\]

\end{AlgorithmSteps}

\textsc{End For}

\item[3]
$\hat{k} = k+1$.

\end{AlgorithmSteps}

\textsc{End For}

\end{algorithm}

\section{Numerical experiments}\label{sec:NumericalExperiments}
In this section, we compare the proposed  
FB--LISA approach with several deterministic and stochastic algorithms.
All experiments are implemented in Matlab\textsuperscript{\textregistered} R2025b and executed on a workstation equipped with an Intel\textsuperscript{\textregistered} Xeon\textsuperscript{\textregistered} Gold 5418Y CPU with 128 GB of system RAM, and an NVIDIA RTX A1000 GPU with 8 GB of GPU memory. 
The code will be freely available on GitHub upon acceptance.

\paragraph{Dataset}
We consider the 3D cone--beam computed tomography (CBCT) dataset of a walnut in its shell, publicly available on Zenodo~\cite{dataset}. It consists of the projection images from the scan of a walnut over multiple view angles. The projections are binned and organized so that the final volume is of dimensions $560{\times}560{\times}560$. The dataset provides realistic acquisition measurements that are well--suited for evaluating reconstruction methods in challenging scenarios such as sparse--view settings.

As the dataset does not include a reference reconstruction, we generate the ground truth volume $x_{\text{GT}}$ by applying the Feldkamp--Davis--Kress (FDK) algorithm \cite{Feldkamp-Davis-Kress-1984} to the complete set of available projection views, i.e., 720 in [0, 2$\pi$). FDK is a standard analytical reconstruction technique for cone--beam geometry, available in most commercial CT scanners. When all projections are used, it provides a high--quality approximation of the underlying object, which therefore can be adopted as a reference for quantitative evaluation.

\paragraph{Similarity measures}
Reconstruction quality is assessed by comparing each reconstructed volume with the reference ground truth $x_{\text{GT}}$ using multiple quantitative metrics. Specifically, we consider the relative error (RE),  the peak signal--to--noise ratio (PSNR) and the structured similarity index (SSIM) \cite{ssim} provided by Matlab. Finally, we also report the Haar wavelet--based perceptual similarity index (HaarPSI), proposed in \cite{haar}. 
All metrics are computed over the entire 3D reconstructed volume, rather than on individual slices. 

\paragraph{Compared Methods}
We compare the proposed FB--LISA method, whose reconstruction will be hereafter denoted by $x_{\text{FB--LISA}}$, against the deterministic and stochastic algorithms listed below. 
For FB--LISA we adopt the following hyperparameter configuration (the notation is consistent with that of~\cite{Franchini-etal-2024}): $N_0 = 8$, $\alpha_0 = 10^{-3}$, $M = 1$ and $\varepsilon_k = 0.99^k$. In addition, the step size is reinitialized at each iteration before the line--search (i.e., $\alpha_{k+1} = \alpha_0$). The remaining hyperparameters are set as in \cite{Franchini-etal-2024}. We remark that $N_0$ and $\varepsilon_k$ were selected a priori based on the desired increase in the mini--batch size, without empirical validation.

\begin{itemize}
    \item \textbf{Feldkamp--Davis--Kress} ($x_{\text{FDK}}$): it serves as a baseline due to its computational efficiency and widespread use in clinical applications. Notice that, in contrast to $x_{\text{GT}}$, this is computed using undersampled projections (see \textit{Experimental Setup} below).

    \item \textbf{Forward--Backward} ($x_{\text{FB}}$) \cite{Combettes-etal-2005}: a first order deterministic optimization method. The step size is fixed throughout the iterations and set to $\alpha = 10^{-5}$ and there is no line--search.

    \item  \textbf{Variable Metric Inexact Line--search} ($x_{\text{VMILA}}$) \cite{VMILA_or}: a deterministic method that incorporates a variable metric and a non monotone line--search strategy enhanced by adaptive Barzilai--Borwein rules \cite{Barzilai-Borwein-1988,frassoldati}. The initial step size is set to $\alpha_0 = 1$.

    \item \textbf{Adaptive Stochastic Primal--Dual Hybrid Gradient} ($x_{\text{A--SPDHG}}$) \cite{Chambolle-etal-2024}: a\\  stochastic primal--dual gradient method. Unlike FB--LISA, it uses a fixed mini--batch size throughout the iterations, set to $N = 10$, and does not incorporate a batch growth strategy.  The step size is controlled by a parameter $s = 10^{-5}$.
\end{itemize}
\paragraph{Experimental Setup}
We consider three experimental scenarios to evaluate the performance of the compared methods under different acquisition and noise conditions. 

\begin{itemize}
\item Case 1: $n_{\theta}=36$ uniformly spaced  projection angles in $[0,2\pi)$, without additional noise.
\item Case 2: $n_{\theta}=36$ uniformly spaced projection angles in $[0,2\pi)$, with additive Gaussian noise with zero mean and $2\%$ variance.
\item Case 3: $n_{\theta}=72$ uniformly spaced projection angles in $[0,2\pi)$, with additive Gaussian noise with zero mean and $2\%$ variance.
      
\end{itemize}
The cone--beam geometry is chosen to match the specifications of the original walnut scan. The projection operator is implemented using the ASTRA Toolbox \cite{astra}, which ensures efficient and accurate forward and backprojection operations. 
For these three settings, the regularization parameter is set to $\mu = 2, 1, 1$, respectively.

As a stopping criterion, we impose a maximum time budget of 240 seconds for all methods. In addition, intermediate reconstruction quality metrics are recorded at 120 seconds in order to assess the convergence behavior.

\begin{figure}[t]
\captionsetup[subfloat]{labelformat=empty}
	\centering
	\subfloat[ $x_{\text{GT}}$]{
	\scalebox{0.82}{
	\begin{tikzpicture}
	\begin{scope}[spy using outlines={rectangle,red,magnification=2,size=1.5cm}]
	\node [name=c]{	\includegraphics[height=3cm]{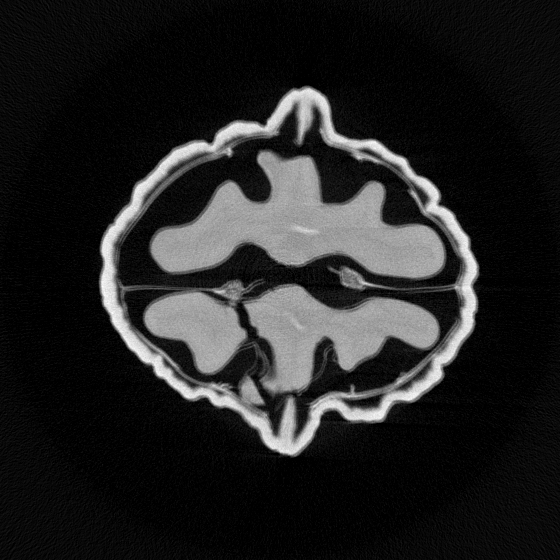}};
	\spy on (0.8,0) in node [name=c1]  at (-0.75,-2.2);
	\spy on (-0.2,-0.5) in node [name=c1]  at ( 0.75,-2.2);
	\end{scope}
	\end{tikzpicture}}}
	\hspace{-0.3cm}
	\subfloat[ \shortstack{$x_{\text{FB}}$ \\ \scriptsize PSNR: 20.73, \\ SSIM: 0.56,\\ \scriptsize HaarPSI: 0.37}]{
	\scalebox{0.82}{
	\begin{tikzpicture}
	\begin{scope}[spy using outlines={rectangle,red,magnification=2,size=1.5cm}]
	\node [name=c]{	\includegraphics[height=3cm]{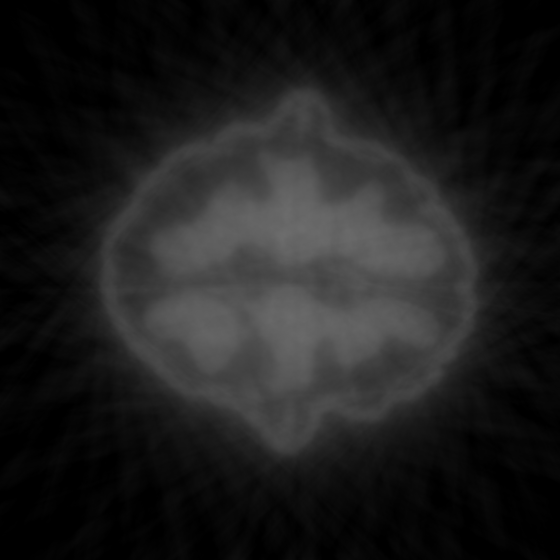}};
	\spy on (0.8,0) in node [name=c1]  at (-0.75,-2.2);
	\spy on (-0.2,-0.5) in node [name=c1]  at ( 0.75,-2.2);
	\end{scope}
	\end{tikzpicture}}}  
	\hspace{-0.3cm}
    \subfloat[ \shortstack{  $x_{\text{VMILA}}$ \\ \scriptsize PSNR: 23.98, \\ SSIM: 0.59,\\ \scriptsize HaarPSI: 0.48}]{
	\scalebox{0.82}{
	\begin{tikzpicture}
	\begin{scope}[spy using outlines={rectangle,red,magnification=2,size=1.5cm}]
	\node [name=c]{	\includegraphics[height=3cm]{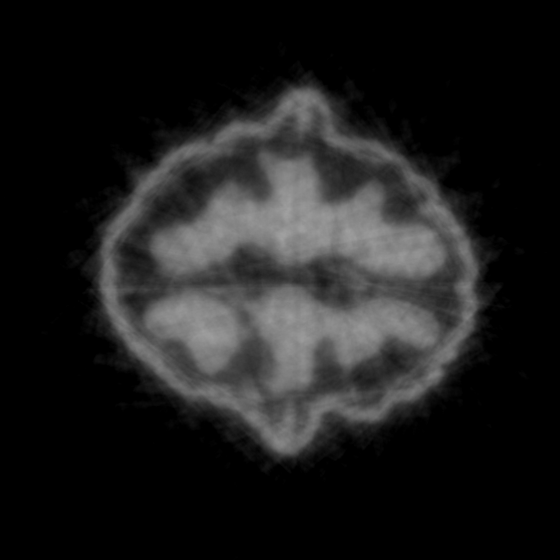}};
	\spy on (0.8,0) in node [name=c1]  at (-0.75,-2.2);
	\spy on (-0.2,-0.5) in node [name=c1]  at ( 0.75,-2.2);
	\end{scope}
	\end{tikzpicture}}}
	 \hspace{-0.3cm}
    \subfloat[  \shortstack{$x_{\text{A--SPDHG}}$ \\ \scriptsize  PSNR: 19.17, \\ SSIM: 0.56,\\ \scriptsize HaarPSI: 0.28}]{
	\scalebox{0.82}{
	\begin{tikzpicture}
	\begin{scope}[spy using outlines={rectangle,red,magnification=2,size=1.5cm}]
	\node [name=c]{	\includegraphics[height=3cm]{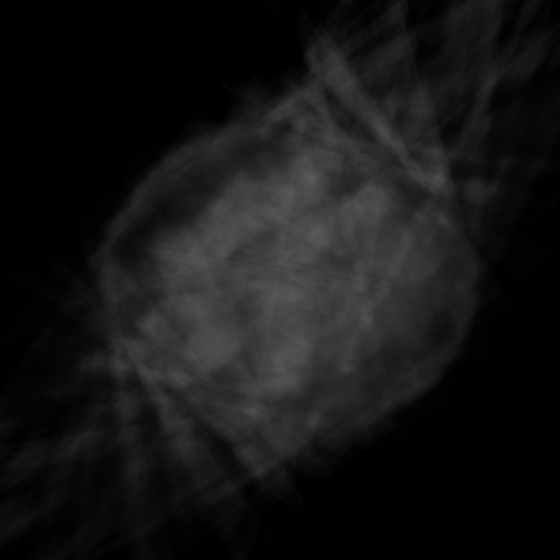}};
	\spy on (0.8,0) in node [name=c1]  at (-0.75,-2.2);
	\spy on (-0.2,-0.5) in node [name=c1]  at ( 0.75,-2.2);
	\end{scope}
	\end{tikzpicture}}} 
    \hspace{-0.3cm}
	\subfloat[ \shortstack{  $x_{\text{FB--LISA}}$ \\ \scriptsize PSNR: 27.53, \\ SSIM: 0.61,\\ \scriptsize HaarPSI: 0.52}]{
	\scalebox{0.82}{
	\begin{tikzpicture}
	\begin{scope}[spy using outlines={rectangle,red,magnification=2,size=1.5cm}]
	\node [name=c]{	\includegraphics[height=3cm]{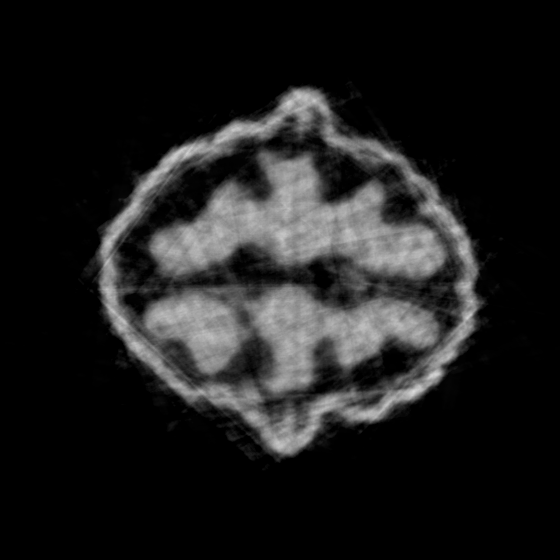}};
	\spy on (0.8,0) in node [name=c1]  at (-0.75,-2.2);
	\spy on (-0.2,-0.5) in node [name=c1]  at ( 0.75,-2.2);
	\end{scope}
	\end{tikzpicture}}}
    \quad\\
    \subfloat[ \shortstack{  $x_{\text{FDK}}$ \\ \scriptsize PSNR: 20.85, \\ SSIM: 0.26,\\ \scriptsize HaarPSI: 0.30}]{
	\scalebox{0.82}{
	\begin{tikzpicture}
	\begin{scope}[spy using outlines={rectangle,red,magnification=2,size=1.5cm}]
	\node [name=c]{	\includegraphics[height=3cm]{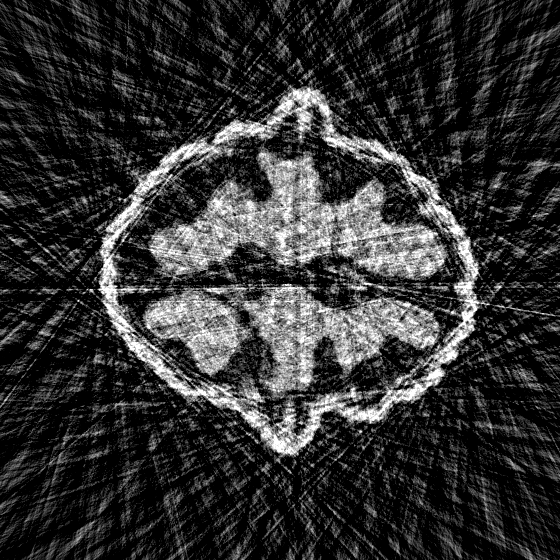}};
	\spy on (0.8,0) in node [name=c1]  at (-0.75,-2.2);
	\spy on (-0.2,-0.5) in node [name=c1]  at ( 0.75,-2.2);
	\end{scope}
	\end{tikzpicture}}}
	\hspace{-0.3cm}
	\subfloat[ \shortstack{$x_{\text{FB}}$ \\ \scriptsize PSNR: 22.15, \\ SSIM: 0.57, \\ \scriptsize HaarPSI: 0.42}]{
	\scalebox{0.82}{
	\begin{tikzpicture}
	\begin{scope}[spy using outlines={rectangle,red,magnification=2,size=1.5cm}]
	\node [name=c]{	\includegraphics[height=3cm]{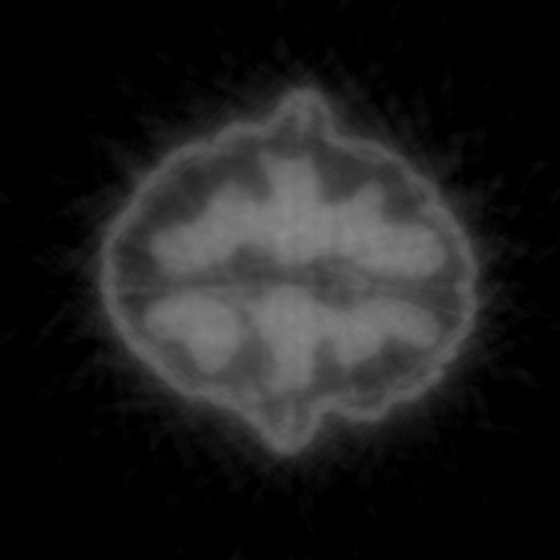}};
	\spy on (0.8,0) in node [name=c1]  at (-0.75,-2.2);
	\spy on (-0.2,-0.5) in node [name=c1]  at (0.75,-2.2);
	\end{scope}
	\end{tikzpicture}}}  
	\hspace{-0.3cm}
    \subfloat[ \shortstack{  $x_{\text{VMILA}}$ \\ \scriptsize PSNR: 26.26, \\ SSIM: 0.61,\\ \scriptsize HaarPSI: 0.51}]{
	\scalebox{0.82}{
	\begin{tikzpicture}
	\begin{scope}[spy using outlines={rectangle,red,magnification=2,size=1.5cm}]
	\node [name=c]{	\includegraphics[height=3cm]{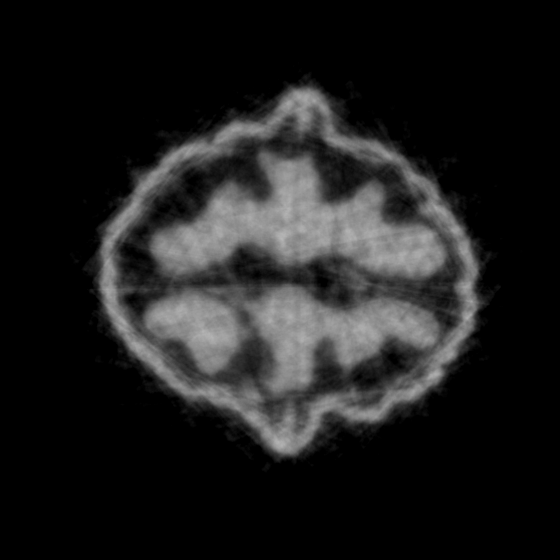}};
	\spy on (0.8,0) in node [name=c1]  at (-0.75,-2.2);
	\spy on (-0.2,-0.5) in node [name=c1]  at (0.75,-2.2);
	\end{scope}
	\end{tikzpicture}}}
	\hspace{-0.3cm}
    \subfloat[ \shortstack{$x_{\text{A--SPDHG}}$ \\ \scriptsize PSNR: 22.93, \\ SSIM: 0.56,\\ \scriptsize HaarPSI: 0.39}]{
	\scalebox{0.82}{
	\begin{tikzpicture}
	\begin{scope}[spy using outlines={rectangle,red,magnification=2,size=1.5cm}]
	\node [name=c]{	\includegraphics[height=3cm]{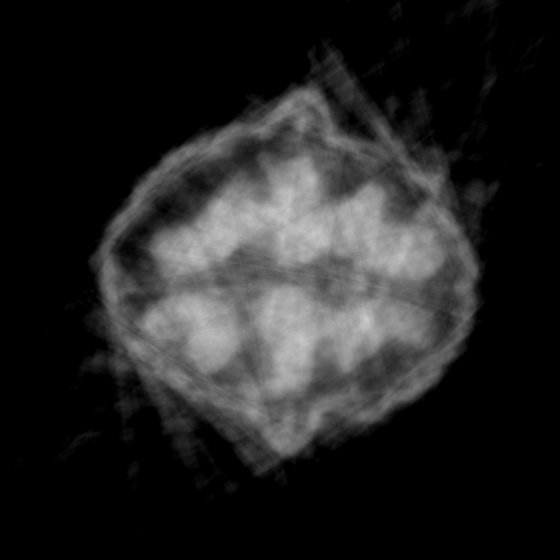}};
	\spy on (0.8,0) in node [name=c1]  at (-0.75,-2.2);
	\spy on (-0.2,-0.5) in node [name=c1]  at (0.75,-2.2);
	\end{scope}
	\end{tikzpicture}}} 
    \hspace{-0.3cm}
	\subfloat[ \shortstack{  $x_{\text{FB--LISA}}$ \\ 
    \scriptsize PSNR: 28.99, \\ SSIM: 0.61,\\ \scriptsize HaarPSI: 0.56}]{
	\scalebox{0.82}{
	\begin{tikzpicture}
	\begin{scope}[spy using outlines={rectangle,red,magnification=2,size=1.5cm}]
	\node [name=c]{	\includegraphics[height=3cm]{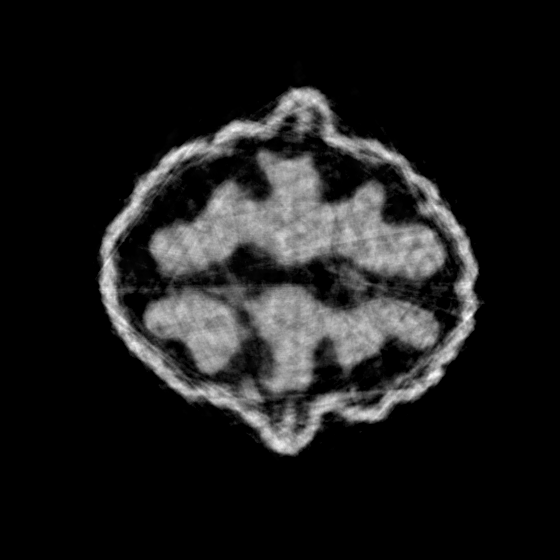}};
	\spy on (0.8,0) in node [name=c1]  at (-0.75,-2.2);
	\spy on (-0.2,-0.5) in node [name=c1]  at (0.75,-2.2);
	\end{scope}
	\end{tikzpicture}}}
    \caption{Reconstruction results for Case 1. The first column shows the ground truth and the FDK reconstruction, while columns 2--4 present the reconstructions of the compared methods (first row: 120 seconds; second row: 240 seconds).}
    \label{fig:recon}
\end{figure}

Fig.~\ref{fig:recon} show the reconstruction corresponding to the axial slice $(:,:,300)$ (with the respective reconstruction quality metrics) for all considered methods at 120 and 240 seconds, respectively, for the scenario described in Case 1. 
The non--iterative reconstruction $x_{\text{FDK}}$ presents many streaking artifacts in the background, due to the sever undersampling of the data. This is a known limitation of FDK  when applied to scarce data. 
For the iterative approaches, at 120 seconds, we observe that, concerning the deterministic methods,  VMILA already provides visually acceptable reconstructions, while $x_{\text{FB}}$ is still far from the reference $x_{\text{GT}}$, with blurred structures and missing details, especially in low--contrast regions. $x_{\text{A--SPDHG}}$ shows limited progress at this stage and remains far from the reference $x_{\text{GT}}$. Instead, $x_{\text{FB--LISA}}$ shows a faster initial progression toward the reference solution in terms of PSNR and SSIM, showing better structural preservation, as indicated by the lowest HaarPSI metric. Notably, the features inside the zoomed--in regions are much more defined for $x_{\text{FB--LISA}}$ compared to all other methods.
At 240 seconds, all iterative methods converge closer to the $x_{\text{GT}}$, with $x_{\text{FB--LISA}}$ reaching the highest PSNR (28.99), getting closest to the reference solution in terms of quantitative metrics, and maintaining competitive SSIM and HaarPSI metrics. 
$x_{\text{FB}}$ and $x_{\text{VMILA}}$ improve over their 120--second performance, but the reconstructed details remain less sharp than those of $x_{\text{FB--LISA}}$. $x_{\text{A--SPDHG}}$ improves but moderately: this is perhaps due to its fixed mini--batch size which appears to limit the reconstruction quality. In contrast,  $x_{\text{FB--LISA}}$ is able to gradually include more and more information in the reconstruction process, thank to
its increasing mini--batch strategy. Again, the zoomed--in elements are more clearly resolved for $x_{\text{FB--LISA}}$ than for the other methods. 

\begin{table}[!ht]
    \centering
    \caption{Quantitative reconstruction metrics for Case 2 }
        \begin{tabular}{ll c c c c}
\toprule
Run time& Methods  & RE & PSNR & SSIM & HaarPSI \\
\midrule
-- & FDK &1.31& 14.25&0.03& 0.24\\
\midrule
\multirow{4}{*}{120 secs} 
& FB&0.63&20.59&0.57&0.37\\
& VMILA&0.41&24.30&0.59&0.46\\
& A--SPDHG&0.62&20.71&0.53&0.27\\
& FB--LISA &\textbf{0.31}&\textbf{26.78}&\textbf{0.61}&\textbf{0.48}\\
\midrule
\multirow{4}{*}{240 secs}
& FB&0.54&21.98&0.58&0.40\\
& VMILA&0.31&26.80&\textbf{0.60}&0.48\\
& A--SPDHG&0.41&24.30&0.58&0.41\\
& FB--LISA &\textbf{0.25}&\textbf{28.69}&\textbf{0.60}&\textbf{0.52}\\
\midrule
\bottomrule
 \end{tabular}
    \label{tab:36_2perc}
\end{table}

Table~\ref{tab:36_2perc} summarizes the reconstruction metrics for all considered methods under noisy conditions for the scenario of Case 2. 
At 120 seconds, FB--LISA already reaches the lowest RE (0.31), indicating a faster approach toward the reference solution and highest PSNR (26.78), outperforming all other methods. FDK, as expected, shows poor performance due to the limited number of views and the noise. A--SPDHG performs better than FB initially in PSNR but still exhibits higher RE and HaarPSI error.
At 240 seconds, all iterative methods improve, with FB--LISA maintaining the best performance across all metrics. VMILA also reaches competitive PSNR and SSIM, while FB improves moderately but remains slightly behind. A--SPDHG benefits from additional time but again  the reconstruction quality remains limited compared to that of FB--LISA.

\begin{table}[!ht]
    \centering
    \caption{Quantitative reconstruction metrics for Case 3 }
        \begin{tabular}{ll c c c c}
\toprule
Run time& Methods  & RE & PSNR & SSIM & HaarPSI \\
\midrule
-- & FDK &0.93& 17.14&0.06& 0.29\\
\midrule
\multirow{4}{*}{120 secs} 
& FB&0.53&22.01&0.61&0.45\\
& VMILA&0.46&23.24&0.59&0.48\\
& A--SPDHG&0.87&17.76&0.62&0.24\\
& FB--LISA &\textbf{0.31}&\textbf{26.88}&\textbf{0.63}&\textbf{0.49}\\
\midrule
\multirow{4}{*}{240 secs}
& FB&0.43&23.87&0.60&0.51\\
& VMILA&0.25&28.54&0.63&0.53\\
& A--SPDHG&0.53&22.07&0.58&0.37\\
& FB--LISA &\textbf{0.22}&\textbf{29.74}&\textbf{0.64}&\textbf{0.55}\\

\midrule
\bottomrule
 \end{tabular}
    \label{tab:72_2perc}
\end{table}
Finally, Table~\ref{tab:72_2perc} reports the  reconstruction metrics for all considered methods for the Case 3.
Also in this case, we draw similar conclusions to those of the previous cases. FDK remains limited due to the sparse--view acquisition, despite the increased number of projection views. Deterministic methods FB and VMILA perform reasonably well, but FB remains slightly behind. A--SPDHG benefits from additional time to sensibly improve the reconstruction metrics, which nevertheless remain below the other methods. Overall, FB--LISA  provides the best reconstruction at already 120 seconds.

\begin{figure}
    \centering
    \includegraphics[width=0.8\linewidth]{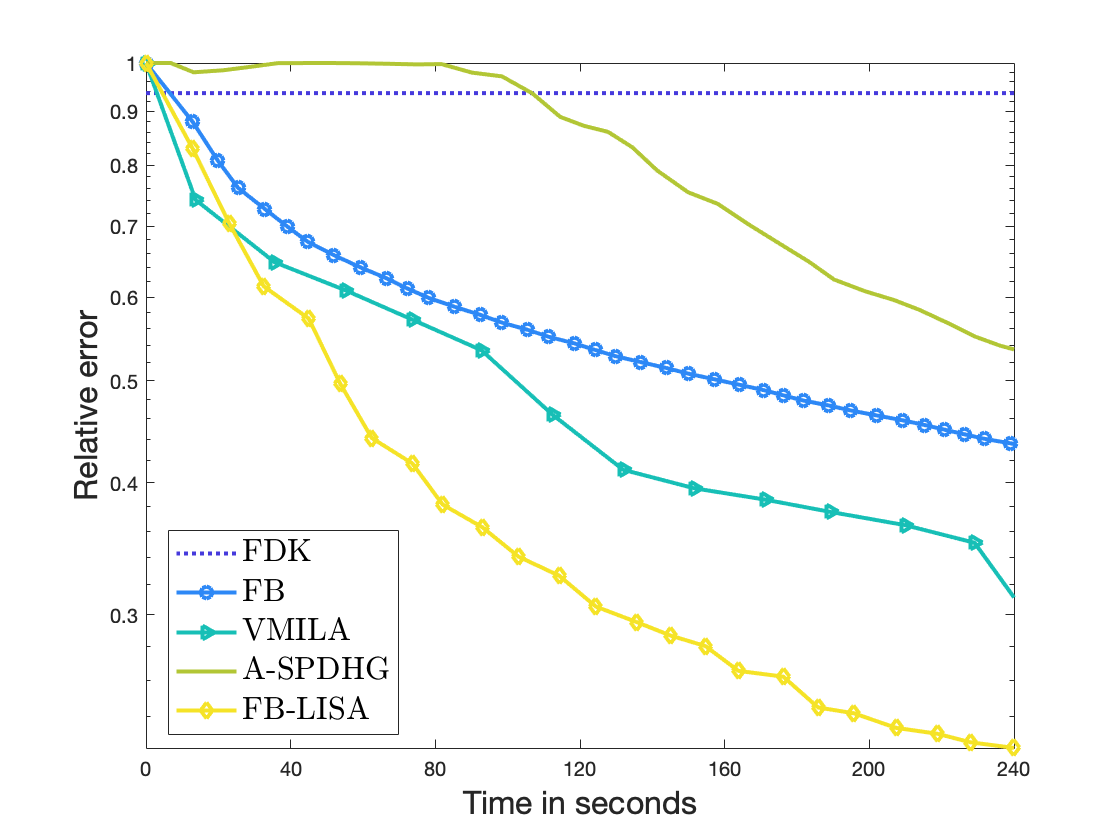}
    \caption{Relative error vs execution time in seconds for Case 3.}
    \label{fig:relerr_72_r2}
\end{figure}

This is further confirmed in Fig.~\ref{fig:relerr_72_r2}, which reports the relative error as a function of execution time (in seconds) for all compared methods. 
FB--LISA (yellow)  consistently shows the fastest decay toward lower error values after 25 seconds, reaching a relative error below $0.25$ at $240\,\text{ seconds}$, approaching the lowest error values among all methods. 
The stochastic nature of FB--LISA is maintained throughout, since the mini--batch size grows from an initial $N_{\text{batch}} =8$ to $N_{\text{batch}} =13$, never reaching the full--batch regime. 
VMILA (teal) exhibits competitive convergence behavior in the early iterations but slows considerably after $150\,\text{ seconds}$, eventually dropping sharply to approximately $0.23$ only at the very end of the run. FB (light blue) steadily progresses toward the solution, although at a slower rate, stagnating around a relative error of $0.43$ at termination. A--SPDHG (olive green) shows little progress in the first $100\,\text{seconds}$ and converges to a final error of about $0.53$, the worst among the iterative methods. Finally, the FDK baseline (blue, dotted) yields a relative error of 0.93, confirming its inadequacy for severely undersampled data.

\section{Conclusions}\label{sec:conclusions}
In this letter, we designed FB--LISA, a first--order stochastic method to handle nonsmooth objectives and solve large--scale, e.g., 3D CT problems. 
FB--LISA relies on adaptive strategies for both learning rate selection (via line--search) and mini--batch size increase.
Notably, the mini--batch growth can be precomputed prior to execution, ensuring compliance with the memory limitations imposed by hardware accelerators. 
Overall, the results across all studied cases indicate that FB–LISA achieves better metrics while converging significantly faster toward the reference solution. 

Future work will focus on the application of FB--LISA to other large--scale imaging inverse problems characterized by computationally expensive forward models, and proving stronger convergence results, such as the almost sure convergence of the whole sequence.

\section*{Acknowledgments}
All authors are members of the Gruppo Nazionale per il Calcolo Scientifico (GNCS) of the Istituto Nazionale di Alta Matematica ``Francesco Severi'' (INdAM), which is kindly acknowledged. This work was partially supported through ``Bando giovani ricercatori 2025 per progetti di ricerca finanziato con contributo 5x1000 anno finanziario 2023'' by the Department of Mathematics and Computer Science of the University of Ferrara.

\appendix

\section{Convergence results}\label{sec:appendix}
This section provides the convergence analysis of the FB--LISA method. The analysis extends \cite[Lemma 2.2, Theorem 2.1]{Franchini-etal-2024}. In particular, we prove the well--definiteness of the line--search and the almost sure stationarity of the limit points.

\begin{lemma}
    Given problem \eqref{eq:gen_prob}, suppose that $f_i:\mathbb{R}^d\rightarrow\mathbb{R}$, $i=1,\dots,N$, are differentiable functions with $L$--Lipschitz continuous gradient. Then the line--search employed by the FB--LISA method is well defined and the steplength $\alpha_k$ satisfies:
    $$
    \tilde{\alpha}=\frac{\beta}{L}< \alpha_k\leq\alpha_{max}.
    $$
\end{lemma}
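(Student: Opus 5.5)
The plan is the usual descent-lemma argument for backtracking, applied to the sub-sampled function $f_{\mathcal{N}_k}$ rather than to $f$.

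\textbf{Step 1: smoothness of the sub-sampled function.} I first observe that $f_{\mathcal{N}_k}=\frac{1}{|\mathcal{N}_k|}\sum_{i\in\mathcal{N}_k}f_i$ is an average of functions with $L$--Lipschitz gradients. Hence $\nabla f_{\mathcal{N}_k}$ is itself $L$--Lipschitz, by the triangle inequality on the average. The standard descent lemma then gives, for every $y$,
$$
f_{\mathcal{N}_k}(y)\le f_{\mathcal{N}_k}(x^{(k)})+\nabla f_{\mathcal{N}_k}(x^{(k)})^T(y-x^{(k)})+\frac{L}{2}\|y-x^{(k)}\|^2 .
$$

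\textbf{Step 2: the test succeeds for small steps.} Take $y=\bar{x}^{(k)}$, which is whatever point the proximal step produces for the current trial $\alpha$. If $\alpha\le 1/L$, then $\frac{L}{2}\le\frac{1}{2\alpha}$, so inequality \eqref{eq:line_search} holds automatically. This needs no property of $\prox_{\alpha\mathcal{R}}$ beyond being well defined, which holds since $\mathcal{R}$ is convex, proper and real-valued. So the acceptance test is passed by every trial step $\alpha\le 1/L$.

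\textbf{Step 3: finite termination and the bounds.} The trial steps are $\alpha_0\beta^j$ with $\alpha_0=\alpha_{max}$, the reset value in \textsc{Step 2d}. Since $\beta\in(0,1)$, some finite $j$ gives $\alpha_0\beta^j\le 1/L$, so the backtracking stops after finitely many reductions. This proves the line-search is well defined.

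For the bounds, note first that $\alpha_k\le\alpha_0=\alpha_{max}$ trivially. For the lower bound, split into two cases.
\begin{itemize}
\item If no reduction occurs, then $\alpha_k=\alpha_{max}$. Under the standing assumption $\alpha_{max}\le\frac{1}{2L}$ this value is below $1/L$, so the test passes at once. To get $\alpha_{max}>\beta/L$ I rely on $\alpha_{max}$ being chosen above $\beta/L$, which I take as implicit in the parameter choice; otherwise the bound should read $\min\{\alpha_{max},\beta/L\}$.
\item If at least one reduction occurs, the previous trial $\alpha_k/\beta$ was rejected. By Step 2 it must then satisfy $\alpha_k/\beta>1/L$, which gives $\alpha_k>\beta/L$.
\end{itemize}

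\textbf{Main obstacle.} The only delicate point is this lower-bound bookkeeping, in particular reconciling the strict inequality with the no-backtracking case. I would handle it by stating the needed compatibility of $\alpha_{max}$ with $\beta/L$ explicitly. Everything else is routine.
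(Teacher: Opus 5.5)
Your proof is correct and follows the paper's argument: the descent lemma applied to $f_{\mathcal{N}_k}$ shows that every trial $\alpha\le 1/L$ is accepted, and when backtracking occurs the rejected trial $\alpha_k/\beta$ must exceed $1/L$, which gives $\alpha_k>\beta/L$. You also treat the no-backtracking case explicitly, where $\alpha_k=\alpha_{max}$ and the stated bound needs $\alpha_{max}>\beta/L$ (otherwise it becomes $\min\{\alpha_{max},\beta/L\}$); the paper's proof skips this case, and your fix is the right one.
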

\begin{proof}
    The Lipschitz continuity of $\nabla f$ allows to apply the descent lemma  \cite[Lemma 5.7]{Beck-2017} and state that
\begin{equation}\label{eq:dis_f1}\tag{A1}
    f_{\mathcal{N}_k}(\bar{x}^{(k)})\leq f_{\mathcal{N}_k}(x^{(k)})+\langle\nabla f_{\mathcal{N}_k}(x^{(k)}),{\bar{x}^{(k)}}-x^{(k)}\rangle+\frac{L}{2}\|\bar{x}^{(k)}-x^{(k)}\|^2.
\end{equation}
As a consequence, the line--search employed by the FB--LISA method for non--smooth composite optimization problem is well defined. Indeed any $\alpha_k\leq\frac{1}{L}$ satisfies inequality (3).\vspace{0.1cm}\\
Let $s$ be the guess value for the line--search at the $k$--th iteration. If $s$ does not satisfy inequality (3), then there exists $\ell_k>0$, such that $\alpha_k=s\beta^{\ell_k}$ satisfies (3), while $\alpha_k/\beta$ does not.  By the descent lemma \cite[Lemma 5.7]{Beck-2017}, this implies that $\alpha_k/\beta > \frac{1}{L}$, and therefore
$$
\alpha_k > \frac{\beta}{L}, \qquad \forall k.
$$
\end{proof}

Next, we prove our main result, Theorem~\ref{thm:MainResult}, whose statement is provided in Section~\ref{ssec:LISA_for_CT}.

\begin{proof}
In view of the Lipschitz continuity of $\nabla f$, we can apply the descent lemma  \cite[Lemma 5.7]{Beck-2017} and state that
\begin{equation}\label{eq:dis_f}\tag{A2}
    f(x^{(k+1)})\leq f(x^{(k)})+\langle\nabla f(x^{(k)}),x^{(k+1)}-x^{(k)}\rangle+\frac{L}{2}\|x^{(k+1)}-x^{(k)}\|^2.
\end{equation}
From the definitions of the proximal operator of $\alpha_k\mathcal{R}$ and the subdifferential of $\mathcal{R}$, it follows that
$$
\frac{1}{\alpha_k}(x^{(k)}-x^{(k+1)}) - \nabla f_{\mathcal{N}_k}(x^{(k)}) \in \partial \mathcal{R}(x^{(k+1)})
$$
and 
\begin{equation}\label{eq:dis_R}\tag{A3}
\mathcal{R}(y)\geq \mathcal{R}(x^{(k+1)})+\langle\frac{1}{\alpha_k}(x^{(k)}-x^{(k+1)})- \nabla f_{\mathcal{N}_k}(x^{(k)}),y-x^{(k+1)}\rangle, \quad \forall y\in\mathbb{R}^d.
\end{equation}
For clarity, we introduce the following notation:
$$
G_k = \frac{1}{\alpha_k}(x^{(k)}-x^{(k+1)});\quad 
e^{(k)} = \nabla f(x^{(k)})-\nabla f_{\mathcal{N}_k}(x^{(k)}).
$$
By combining \eqref{eq:dis_f} and \eqref{eq:dis_R} with $y=x^{(k)}$, we get
\begin{equation}\label{eq:3}\tag{A4}
    \begin{aligned}
        f(x^{(k+1)})+\mathcal{R}(x^{(k+1)})&\leq f(x^{(k)})+\mathcal{R}(x^{(k)}) + \langle e^{(k)}+G_k,x^{(k+1)}-x^{(k)}\rangle +\frac{L}{2}\|x^{(k+1)}-x^{(k)}\|^2\\
        &=f(x^{(k)})+\mathcal{R}(x^{(k)})+ \langle e^{(k)}+G_k,-\alpha_k G_k\rangle + \frac{L}{2}\alpha_k^2\|G_k\|^2\\
        &=f(x^{(k)})+\mathcal{R}(x^{(k)})-\alpha_k\langle e^{(k)},G_k\rangle -\alpha_k \|G_k\|^2+\frac{L}{2}\alpha_k^2\|G_k\|^2\\
        &\leq f(x^{(k)})+\mathcal{R}(x^{(k)}) -\alpha_k \|G_k\|^2+\frac{\alpha_k}{2}\|e^{(k)}\|^2+\frac{\alpha_k}{2}\|G_k\|^2+\\&+\frac{L}{2}\alpha_k^2\|G_k\|^2\\
        &= f(x^{(k)})+\mathcal{R}(x^{(k)})-\frac{\alpha_k}{2}(1-L\alpha_k)\|G_k\|^2+\frac{\alpha_k}{2}\|e^{(k)}\|^2\\
        &\leq f(x^{(k)})+\mathcal{R}(x^{(k)})-\frac{\alpha_k}{4}\|G_k\|^2+\frac{\alpha_k}{2}\|e^{(k)}\|^2,
    \end{aligned}
\end{equation}
where the second inequality follows from the Young's inequality\footnote{Young's inequality: $a^Tb \leq \frac{\|a\|^2}{2\xi} + \frac{\xi\|b\|^2}{2}$, $\forall a,b\in\mathbb{R}^d$, $\xi>0$} and the last inequality holds in view of the assumption on $\alpha_{max}$.\vspace{0.1cm}\\
Now we obtain a bound for $-\|G_k\|^2$ related to  $G^{full}_k {=} \frac{1}{\alpha_k}\left(x^{(k)}{-}\prox_{\alpha_k\mathcal{R}}(x^{(k)}{-}\alpha_k\nabla f(x^{(k)}))\right)$. Particularly,
\begin{equation*}
    \begin{aligned}
        \|G_k^{full}\|^2 &= \|G_k^{full}-G_k+G_k\|^2 = \|G_k^{full}-G_k\|^2+\|G_k\|^2+2\langle G_k,G_k^{full}-G_k\rangle\\
        &\leq \|G_k^{full}-G_k\|^2+\|G_k\|^2+\|G_k\|^2+\|G_k^{full}-G_k\|^2,
    \end{aligned}
\end{equation*}
where the inequality follows by again invoking the Young's inequality. As a consequence,
\begin{equation}\label{eq:G_k_G_k_full}\tag{A5}
\|G_k\|^2\geq \frac{1}{2}\|G_k^{full}\|^2 - \|G_k^{full}-G_k\|^2.
\end{equation}
By plugging \eqref{eq:G_k_G_k_full} into \eqref{eq:3} we can write
\begin{equation*}
\begin{aligned}
    f(x^{(k+1)})+\mathcal{R}(x^{(k+1)})&\leq f(x^{(k)})+\mathcal{R}(x^{(k)})-\frac{\alpha_k}{8}\|G_k^{full}\|^2+\frac{\alpha_k}{4}\|G_k^{full}-G_k\|^2    
    +\frac{\alpha_k}{2}\|e^{(k)}\|^2\\
    &= f(x^{(k)})+\mathcal{R}(x^{(k)})-\frac{\alpha_k}{8}\|G_k^{full}\|^2+\frac{\alpha_k}{2}\|e^{(k)}\|^2+\\
    &\quad +\frac{\alpha_k}{4}\left\|\frac{1}{\alpha_k}\left(x^{(k)}-\prox_{\alpha_k\mathcal{R}}(x^{(k)}-\alpha_k\nabla f(x^{(k)}))\right)+\right.\\
    &\qquad\qquad\left.-\frac{1}{\alpha_k}s\left(x^{(k)}-\prox_{\alpha_k\mathcal{R}}(x^{(k)}-\alpha_k\nabla f_{\mathcal{N}_k}(x^{(k)}))\right)\right\|^2\\
    &\leq f(x^{(k)})+\mathcal{R}(x^{(k)})-\frac{\alpha_k}{8}\|G_k^{full}\|^2+\frac{\alpha_k}{2}\|e^{(k)}\|^2+\frac{\alpha_k}{4}\|e^{(k)}\|^2\\
    &\leq f(x^{(k)})+\mathcal{R}(x^{(k)})-\frac{\tilde{\alpha}}{8}\|G_k^{full}\|^2+\frac{\alpha_{max}}{2}\|e^{(k)}\|^2+\frac{\alpha_{max}}{4}\|e^{(k)}\|^2
    \end{aligned}
\end{equation*}
where the equality follows by the definitions of $G_k^{full}$ and $G_k$, the second inequality is a consequence of the non--expansivity of the proximal operator of $\alpha_k\mathcal{R}$ and the last inequality holds since $\alpha_k\in[\tilde{\alpha},\alpha_{max}]$.
By applying the conditional expected value with respect to the $\sigma$--algebra generated by the iterations $x^{(0)},\dots,x^{(k)}$ to both members of the previous inequality and considering the assumption on $e^{(k)}$, we get 
\begin{equation}\label{eq:5_0} \tag{A6}   \mathbb{E}_k\left[f(x^{(k+1)})+\mathcal{R}(x^{(k+1)})\right]\leq f(x^{(k)})+\mathcal{R}(x^{(k)})-\frac{\tilde{\alpha}}{8}\|G_k^{full}\|^2+\frac{3}{4}\alpha_{max}\varepsilon_k.
\end{equation}

By applying the total expected value, rearranging the terms, summing from $0$ to $K$ and using the telescopic cancellation, it holds that
\begin{equation}\label{eq:5}\tag{A7}
\sum_{k=0}^K \mathbb{E}[\|G_k^{full}\|^2]\leq \frac{8(f(x^{(0)})+\mathcal{R}(x^{(0)}) - F_{inf})}{\tilde{\alpha}}+\frac{3\alpha_{max}\bar{\varepsilon}}{2\tilde{\alpha}},
\end{equation}
where $\bar{\varepsilon}$ exists since the sequence $\{\varepsilon_k\}$ is summable by assumption and hence
$$
\sum_{k=0}^{+\infty}\mathbb{E}[\|G_k^{full}\|^2]< +\infty.
$$
Moreover, by dividing both terms of \eqref{eq:5} by ${K}$, we obtain 
$$
\frac{1}{K}\sum_{k=0}^K \mathbb{E}[\|G_k^{full}\|^2]\leq \frac{8(f(x^{(0)})+\mathcal{R}(x^{(0)}) - F_{inf})}{\tilde{\alpha}K}+\frac{3\alpha_{max}\bar{\varepsilon}}{2\tilde{\alpha}K}.
$$
By subtracting $F_{inf}$ to both members in \eqref{eq:5_0}, the Robbins--Siegmund lemma \cite[Lemma 11, Sec 2.2.2]{Polyak} allows to conclude that $\sum_{k=0}^{\infty}\|G_k^{full}\|^2<+\infty$, a.s. which implies that 
\begin{equation}\label{eq:lim_G_k}\tag{A8}
\lim_{k\rightarrow \infty} \|G_k^{full}\|^2 = \lim_{k\rightarrow \infty}\|x^{(k)}-\prox_{\alpha_k\mathcal{R}}(x^{(k)}-\alpha_k\nabla f(x^{(k)}))\|^2 = 0, \ a.s.
\end{equation}
Let us suppose that there exists a subsequence of $\{x^{(k)}\}$ that converges almost surely to $\bar{x}$, namely there exists $\mathcal{K}\subseteq\mathbb{N}$ such that
$$
\lim_{{k\to\infty, \ k\in\mathcal{K}}} x^{(k)} = \bar{x}
\quad \text{a.s.}
$$
Since $\alpha_k \in [\tilde{\alpha}, \alpha_{\max}]$ for all $k$, the sequence $\{\alpha_k\}_{k\in\mathcal{K}}$ is bounded. Hence, there exist a subsequence $\mathcal{K}' \subseteq \mathcal{K}$ and a scalar $\bar{\alpha} \in [\tilde{\alpha}, \alpha_{\max}]$ such that
$$
\lim_{{k\to\infty, \     k\in\mathcal{K}'}} \alpha_k = \bar{\alpha}.
$$
Moreover, by \eqref{eq:lim_G_k}, it holds that
$$
\lim_{k\to\infty} \left\| x^{(k)} - \prox_{\alpha_k \mathcal{R}}\bigl(x^{(k)} - \alpha_k \nabla f(x^{(k)})\bigr) \right\| = 0
\quad \text{a.s.}
$$
Therefore, along the subsequence $\mathcal{K}'$,
$$
x^{(k)} - \prox_{\alpha_k \mathcal{R}}\bigl(x^{(k)} - \alpha_k \nabla f(x^{(k)})\bigr) \to 0
\quad \text{a.s.}
$$
By continuity of $\nabla f$ and of the proximal operator with respect to both its argument and the parameter $\alpha$, we obtain
$$
\lim_{{k\to\infty, \ k\in\mathcal{K}'}} 
\prox_{\alpha_k \mathcal{R}}\bigl(x^{(k)} - \alpha_k \nabla f(x^{(k)})\bigr)
=
\prox_{\bar{\alpha} \mathcal{R}}\bigl(\bar{x} - \bar{\alpha} \nabla f(\bar{x})\bigr)
\quad \text{a.s.}
$$
Hence,
$$
\bar{x} = \prox_{\bar{\alpha} \mathcal{R}}\bigl(\bar{x} - \bar{\alpha} \nabla f(\bar{x})\bigr)
\quad \text{a.s.}
$$
which is equivalent to
$$
0 \in \nabla f(\bar{x}) + \partial \mathcal{R}(\bar{x}).
$$
Therefore, $\bar{x}$ is a stationary point of problem~(2), almost surely.
\end{proof}

\bibliographystyle{plain}

\bibliography{biblio}

\end{document}